\documentclass[10pt]{amsart}

\usepackage{amssymb,amsmath} 
\usepackage{amsthm}
\usepackage{mathrsfs} 
\usepackage[pagewise]{lineno} 
\newcommand{\lpar}{(}
\newcommand{\rpar}{)}
\newcommand{\mc}{\mathcal}
\newcommand{\BB}{\mathbb B}
\newcommand{\RR}{\mathbb R}
\newcommand{\ZZ}{\mathbb Z}
\newcommand{\ee}{\mathbf{e}}
\newcommand{\zero}{\mathbf{0}}
\newcommand{\one}{\mathbf{1}}



\theoremstyle{plain}
   \newtheorem{thm}{Theorem}[section]
   \newtheorem{lem}[thm]{Lemma}
   \newtheorem{coro}[thm]{Corollary}
   \newtheorem{propo}[thm]{Proposition}
   \newtheorem{claim}[thm]{Claim}
\theoremstyle{definition}
   
\theoremstyle{remark}
    
\numberwithin{equation}{section}

\usepackage[%
   plainpages=false, 
   colorlinks,
   linkcolor=blue,
   linktocpage, 
   anchorcolor=blue,
   citecolor=blue,
   filecolor=blue,
   urlcolor=blue
   ]{hyperref}%
\newcommand{\tref}[2]{\hyperref[#2]{#1~\ref*{#2}}}
\newcommand{\refp}[1]{%
   \hyperref[#1]{\textup{\lpar}\ref*{#1}\textup{\rpar}}}
\newcommand{\trefp}[2]{%
   \hyperref[#2]{#1~\textup{\lpar}\ref*{#2}\textup{\rpar}}}
\newcommand{\trrefp}[3]{%
   \hyperref[#3]{#1~\ref*{#2}\,{\textup{\lpar}\ref*{#3}\textup{\rpar}}}}
\newcommand{\eref}[2]{\hyperref[#2]{#1~\eqref{#2}}}
%
\usepackage{enumerate}
\newenvironment{enuma}{%
   \begin{enumerate}[\quad\textup{\lpar}a\textup{\rpar}]}{%
   \end{enumerate}}
\DeclareMathOperator{\conv}{conv}
\DeclareMathOperator{\STAB}{STAB}
\DeclareMathOperator{\FRAC}{FRAC}
\newcommand{\ch}{^\ast} 
\newcommand{\Q}{Q\ch}
\newcommand{\qast}{\Q(A)}
\newcommand{\qastb}{\overline{\Q(A)}}
\newcommand{\qa}{Q(A)}
\newcommand{\qab}{\overline{Q(A)}}
\newcommand{\II}[1][n]{\mc{I}_{#1}} 
\DeclareMathOperator{\vs}{V}
\newcommand{\vsr}{\vs(R)}
\newcommand{\vsp}{\vs(P)}
\newcommand{\vphi}{\varphi}
\newcommand{\seg}{[\xi,\eta]}
\newcommand{\conj}{\{\xi,\eta\}}
\newcommand{\cube}{[0,1]^n}
\newcommand{\parts}{\mathscr{P}} 
\newcommand{\phix}[1][x]{\vphi(I,#1)}

\newcommand{\phiz}{\phix[\zeta]}
\newcommand{\phizk}{\phix[\zeta^k]}

\providecommand{\arxiv}[2][]{\href{http://www.arxiv.org/abs/#2}{arXiv:#2}}

\begin{document}


\title{%
   Addendum to
   ``Vertex adjacencies in the set covering polyhedron''}

\author{N\'estor~E.~Aguilera}
\address{Facultad de Ingenier\'ia Qu\'imica (UNL).
   Santiago del Estero 2829,
   3000 Santa Fe, Argentina.}
\email{nestoreaguilera@gmail.com}

\author{Ricardo~D.~Katz}
\address{CONICET-CIFASIS.
   Bv.~27 de febrero 210 bis,
   2000 Rosario, Argentina.}
\email{katz@cifasis-conicet.gov.ar}

\author{Paola~B.~Tolomei}
\address{CONICET and
   Facultad de Ciencias Exactas, Ingenier\'ia y Agrimensura (UNR),
   Pellegrini 250, 2000 Rosario, Argentina.}
\email{ptolomei@fceia.unr.edu.ar}
  
\begin{abstract}
We study the relationship between the vertices of an up-monotone polyhedron $R$ and those of the polytope  $P$ obtained by truncating $R$ with the unit hypercube. When $R$ has binary vertices, we characterize the vertices of $P$ in terms of the vertices of $R$, show their integrality, and prove that the 1-skeleton of $R$ is an induced subgraph of the 1-skeleton of $P$. We conclude by applying our findings to settle a claim in the original paper.
\end{abstract}

\keywords{Polyhedral combinatorics, set covering polyhedron, vertex adjacency}

\catcode`\@=11
\@namedef{subjclassname@2010}{%
  \textup{2010} Mathematics Subject Classification}
\catcode`\@=12
\subjclass[2010]{90C57, 52B05, 05C65}

\maketitle

\section{Introduction}
\label{sec:intro}

In \cite{AKT} we studied vertex adjacency in the (unbounded version of the) set covering polyhedron associated with a binary matrix $A$:
\begin{equation}\label{equ:qast}
   \qast = \conv(\{x\in\ZZ^n\mid Ax\ge\one, x\ge\zero\}),
\end{equation}
where $\zero$ and $\one$ denote vectors of appropriate dimension with all zeros and all ones components respectively, and $\conv(X)$ denotes the convex hull of the set $X\subset\RR^n$. This polyhedron is the dominant of the set covering polytope associated with $A$:
\begin{equation}\label{equ:qastb}
   \qastb = \conv(\{x\in\ZZ^n\mid Ax\ge\one,\one \ge x \ge \zero\}),
\end{equation}
that is, $\qast = \qastb +\{x\in \RR^n\mid x \ge \zero\}$ where $+$ denotes the Minkowski sum of subsets of $\RR^n$.

Immediately after stating Theorem~2.1 in \cite{AKT}, we made the following claim:

\begin{claim}\label{laclaim}
It can be proved that for any binary matrix $A$ two vertices of $\qast$ are adjacent if and only if they are adjacent in $\qastb$.
\end{claim}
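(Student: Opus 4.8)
The plan is to derive the claim from a general statement about truncating an up-monotone polyhedron by the unit cube. Write $R=\qast$ and $P=\qastb$. I will use that $R$ is up-monotone and contained in $\{x\in\RR^n:x\ge\zero\}$ — so its recession cone is exactly $\{x:x\ge\zero\}$ — that every vertex of $R$ is binary (as $A$ is binary, if a vertex $v$ had $v_k\ge 2$ then both $v-e_k$ and $v+e_k$ lie in $R$, so $v$ would not be a vertex), and that $P=R\cap\cube$ (the standard identity $\qastb=\qast\cap\cube$, which also follows from the integrality statement below). Thus it suffices to prove: if $R\subseteq\{x:x\ge\zero\}$ is up-monotone with only binary vertices and $P=R\cap\cube$, then two vertices of $R$ are adjacent on $R$ if and only if they are adjacent on $P$.

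I would first record the vertex structure. Since $P\subseteq\cube$, every $0/1$ point of $P$ is a vertex of $P$; in particular every vertex of $R$ — being binary and in $R$ — is a vertex of $P$. Moreover a binary $v\in R$ is a vertex of $R$ precisely when it is minimal, in the componentwise order, among the binary points of $R$: if $u\in R\cap\{0,1\}^n$ with $u\lneq v$, then $v=\tfrac12u+\tfrac12(2v-u)$ with $2v-u=u+2(v-u)\in R+\{x:x\ge\zero\}=R$, contradicting extremality. (That $P$ has no further, fractional, vertices — the integrality statement in the abstract — can be obtained by a perturbation argument: a fractional vertex $v^\ast$ of $P$ decomposes as $v^\ast=c+r$ with $c$ in the convex hull of the vertices of $R$ and $r\ge\zero$; one checks that $r$ vanishes off the coordinates where $v^\ast$ equals $1$, picks two vertices of $R$ with positive weight in $c$ that differ on a coordinate where $v^\ast$ is fractional, and uses their difference to move off $v^\ast$ along a segment lying in $P$. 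This refinement is not actually needed for the claim.) The easy half of the equivalence is now immediate: if $[v,w]$ is an edge of $R$, choose $a^\top x\ge\beta$ valid for $R$ with $R\cap\{a^\top x=\beta\}=[v,w]$; the same inequality is valid on $P\subseteq R$, so $P\cap\{a^\top x=\beta\}=[v,w]$ is a one-dimensional face of $P$, i.e.\ $v$ and $w$ are adjacent on $P$.

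The substance is the converse, which I would prove contrapositively: if vertices $v,w$ of $R$ are not adjacent on $R$, then they are not adjacent on $P$. Set $m=\tfrac12(v+w)\in\{0,\tfrac12,1\}^n$. By the standard characterization of edges of a polyhedron, non-adjacency of $v,w$ on $R$ means $m$ admits a representation $m=\sum_i\gamma_iu^i+r$ with $u^i$ vertices of $R$, $\gamma_i>0$, $\sum_i\gamma_i=1$, $r\ge\zero$, that is not the trivial $\tfrac12v+\tfrac12w$; and since $v,w$ are distinct minimal binary points (so neither dominates the other), any representation supported only on $\{v,w\}$ is forced to be $\tfrac12v+\tfrac12w$ with $r=\zero$, so a nontrivial one must involve some $u^{i_0}\notin\{v,w\}$ with $\gamma_{i_0}>0$. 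It remains to produce such a representation by vertices of $P$. Put $p=\sum_i\gamma_iu^i$, so $m=p+r$, $\zero\le p\le m\le\one$, and for each coordinate $k$ with $r_k>0$ we have $m_k\in\{\tfrac12,1\}$ and $r_k=m_k-p_k\le 1-p_k=\sum_{i:\,u^i_k=0}\gamma_i$. I then split each weight $\gamma_i$ over the liftings $u^i\vee\chi^S$ with $S\subseteq\{k:\,r_k>0,\ u^i_k=0\}$ — concretely, put each eligible coordinate $k$ into $S$ independently with probability $r_k/(1-p_k)$ — so that on every coordinate the deficit $r_k$ is absorbed exactly; each $u^i\vee\chi^S$ dominates $u^i$, hence lies in $R$, is binary, and so is a vertex of $P$, and the resulting convex combination of these vertices equals $m$. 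Finally this representation is nontrivial: a lifting $u^{i_0}\vee\chi^S$ equal to $v$ (or $w$) would force $u^{i_0}\le v$, hence $u^{i_0}=v$ by minimality, against $u^{i_0}\notin\{v,w\}$; so the positive weight $\gamma_{i_0}$ is distributed over vertices of $P$ outside $\{v,w\}$. Thus $m$ has a representation by vertices of $P$ other than $\tfrac12v+\tfrac12w$, and $[v,w]$ is not an edge of $P$.

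The claim then follows by specializing this general statement to $R=\qast$ and $P=\qastb$. I expect the lifting step to be the main obstacle: one must absorb the recession part $r$ into honest $0/1$ points of $R$ while keeping the total an exact convex combination of them equal to $m$ — which is why the weights $\gamma_i$ have to be split fractionally among several liftings, the required deficits $r_k$ being in general not subset sums of the $\gamma_i$ — and one must check that the extra vertex $u^{i_0}$, which certifies non-adjacency on $R$, persists through the lifting and continues to certify it on $P$.
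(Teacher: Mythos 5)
Your proposal is correct, and while the overall architecture (reduce to an up-monotone $R$ with binary vertices, set $P=R\cap\cube$, get the easy direction by restricting the supporting hyperplane of an edge of $R$ to $P$) matches the paper, the substantive direction is handled by a genuinely different mechanism. The paper argues directly: from the supporting hyperplane $c\cdot x\ge b$ of the edge of $P$ it extracts the set $I=\{i:c_i\le 0\}$, shows every point of the edge has $x_i=1$ for $i\in I$, applies the \emph{single} deterministic lifting $\vphi(I,\cdot)$ to a convex-combination-plus-recession representation, and concludes via its Proposition~\ref{propo:adj}; this route needs the full characterization of $\vsp$ as the set of all liftings of vertices of $R$ (Theorem~\ref{thm:lift}, proved by sequentially cutting with the facets of the cube). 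You argue contrapositively: non-adjacency in $R$ gives a nontrivial representation of the midpoint $m$ as a convex combination of vertices of $R$ plus $r\ge\zero$, and you absorb $r$ by \emph{fractionally splitting} each weight over many liftings (including each deficient coordinate independently with probability $r_k/(1-p_k)$), which is a nice way around the fact that the deficits need not be subset sums of the weights; you then only need the easy half of the vertex description (binary points of $P$ are vertices), not Theorem~\ref{thm:lift}. The price is that you invoke as ``standard'' the characterization of non-adjacency by the existence of a nontrivial representation of the midpoint; for an unbounded polyhedron this is exactly the nontrivial implication of the paper's Proposition~\ref{propo:adj}, so it deserves at least a sentence (your minimality observation correctly disposes of the case where the nontriviality sits only in the recession part). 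Two small presentational points: the identity $\qastb=\qast\cap\cube$ is genuinely needed to transfer the general theorem to the claim and does require an argument -- your own splitting construction, applied to an arbitrary point of $\qast\cap\cube$, supplies one, so the remark that integrality ``is not actually needed'' is a little misleading; and the auxiliary facts you use (minimality of binary vertices, liftings staying in $R$, binary points of $P$ being vertices) are precisely the paper's Lemmas~\ref{lem:1} and~\ref{lem:vsp}.
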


Although this result may seem quite natural, we would like to observe that it is no longer true if we replace $\Q(A)$ by its linear relaxation,
\begin{equation}\label{equ:qa}
   \qa = \{x\in\RR^n \mid Ax\ge\one, x\ge\zero\},
\end{equation}
and $\qastb$ by the corresponding bounded version,
\begin{equation}\label{equ:qab}
   \qab = Q(A)\cap\cube.
\end{equation}
This may be seen by considering the circulant matrix
\begin{equation}\label{equ:matA}
   A  =
      \begin{bmatrix}
      1 & 1 & 0 \\
      0 & 1 & 1 \\
      1 & 0 & 1
      \end{bmatrix}.
\end{equation}
In this case, the vertices of $\qab$ are
\[ (1,1,0),\ (0,1,1),\ (1,0,1),\ (1, 1, 1),\ (1/2,1/2,1/2), \]
and $\xi = (1, 1, 0)$ and $\eta = (0, 1, 1)$ are adjacent in $\qab$ but not in $\qa$. 
Furthermore,
as is readily verified,
in this example $\xi$ and $\eta$ are adjacent in $\Q(A)$,
which means that in general $Q(A)$ does not have the \emph{Trubin property} with respect to $\Q(A)$.%
   \footnote{Let us recall that a polyhedron $R$ has the Trubin property with respect to a polyhedron $P$ contained in $R$ if the $1$-skeleton of $P$ is an induced subgraph of the $1$-skeleton of $R$, see~\cite{Trubin}.}

This is rather surprising since in the special case in which $A$ has precisely two ones per row, i.e., the case in which $A$ is the edge-node incidence matrix of a graph $G$, $\qab$
has the Trubin property with respect to $\qastb$.%
	\footnote{Since the linear relaxation $\FRAC(G)$ of the stable set polytope $\STAB(G)$ has the Trubin property with respect to $\STAB(G)$ (see~\cite{Padberg}), and the function $x \to \one - x$ affinely maps $\overline{Q(A)}$ to $\FRAC(G)$.}

One of the aims of this paper is to prove the validity of \tref{Claim}{laclaim}. Along the road we will establish relationships between the vertices of an up-monotone polyhedron $R$ and those of a polyhedron $Q\subseteq R$ such that the vertices of $R$ belong to $Q$. The results here do not depend on those in~\cite{AKT}, and we think they are interesting by themselves.

This addendum is organized as follows. In \tref{Section}{sec:basicos} we introduce some notation and present basic results concerning vertices and their adjacency in an up-monotone polyhedron. \tref{Section}{sec:bound} is the core of the paper, where we study the effect of cutting with the unit hypercube an up-monotone polyhedron having only binary vertices, first characterizing the vertices of the new polytope (\tref{Corollary}{coro:bin1}) and proving their integrality (\tref{Corollary}{coro:bin2}), and then studying the adjacency of the vertices of the larger polyhedron in the new polytope (\tref{Theorem}{thm:adj}). We conclude by relating our findings to the original article~\cite{AKT} in \tref{Section}{sec:original}.

\section{Some properties of vertices in up-monotone polyhedra}
\label{sec:basicos}

In this section we introduce notation which perhaps is not quite established in the literature, and state a few basic results that are either simple to prove or well-known, and so we will omit most of the proofs.

Let us start with the notation, part of which we have already used.

The set $\{1,\dots,n\}$ is denoted by $\II$, the family of subsets of $\II$ by $\parts$, the $i$-th vector of the canonical base of $\RR^n$ by $\ee_i$, and the scalar product in $\RR^n$ by a dot: $x\cdot y = \sum_{i = 1}^n x_i y_i$.

For $x$ and $y$ in $\RR^n$,
$[x,y] = \conv( \{x, y\})$ represents the (closed) segment joining them,
and we write $x\ge y$ (resp.\ $x > y$) if $x_i\ge y_i$ (resp.\ $x_i > y_i$) for all $i \in \II$ (notice that $x\gneqq y$, i.e., $x\ge y$ and $x\ne y$, does not imply $x > y$).

Given a polyhedron $S$, the set of its vertices is denoted by $\vs(S)$.

Throughout the paper we will assume that
$R\subset \{x\in\RR^n \mid x\ge\zero\}$ is a non-empty polyhedron
which is \emph{up-monotone},%
   \footnote{Or \emph{upper comprehensive} in the nomenclature
   of some authors.}
that is, it satisfies any of the following equivalent conditions:
\begin{itemize}
\item
$x\in R$ and $y\ge x$ imply $y\in R$,

\item
$x\in R$ if and only if $x = y + \mu$ with $y\in\conv(\vsr)$ and $\mu\ge\zero$.
\end{itemize}

Our first result relates vertices and minimality in $R$.

\begin{lem}\label{lem:1}
Assuming $\xi$ and $\eta$ are distinct vertices of $R$ and $x\in R$, we have:

\begin{enuma}
\item\label{lem:1:a}
If $x\le \xi$ then $x = \xi$.

\item\label{lem:1:b}
If $x\in\seg$ and $\mu\ge\zero$ is such $x + \mu\in\seg$, then $\mu = \zero$.
\end{enuma}
\end{lem}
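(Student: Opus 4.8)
The plan is to establish \refp{lem:1:a} by a reflection argument that exploits up-monotonicity, and then to deduce \refp{lem:1:b} from \refp{lem:1:a} by parametrizing the segment $\seg$. Throughout I use the standard fact that a vertex of a polyhedron cannot lie in the relative interior of a nondegenerate segment contained in that polyhedron.

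For \refp{lem:1:a}, suppose towards a contradiction that $x\in R$, $x\le\xi$, but $x\ne\xi$. Consider the ``reflection'' $y = 2\xi - x$. Since $\xi - x\ge\zero$ we have $y\ge\xi$, so up-monotonicity gives $y\in R$; and $y\ne x$ because $y - x = 2(\xi - x)\ne\zero$. But then $\xi = \tfrac12 x + \tfrac12 y$ exhibits $\xi$ in the relative interior of the segment $[x,y]\subseteq R$ with endpoints two distinct points of $R$, contradicting that $\xi\in\vsr$. Hence $x=\xi$.

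For \refp{lem:1:b}, write $x = (1-t)\xi + t\eta$ and $x+\mu = (1-s)\xi + s\eta$ with $t,s\in[0,1]$, and subtract to get $\mu = (t-s)(\xi-\eta)$. If $t=s$ then $\mu=\zero$ and we are done. If $t>s$, then $\mu\ge\zero$ together with $t-s>0$ forces $\xi-\eta\ge\zero$, i.e.\ $\eta\le\xi$; applying \refp{lem:1:a} with $x$ replaced by $\eta\in R$ yields $\eta=\xi$, contradicting that $\xi$ and $\eta$ are distinct. The case $t<s$ is symmetric (interchanging the roles of $\xi$ and $\eta$) and again gives $\xi=\eta$, a contradiction. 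Therefore $\mu=\zero$.

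I do not expect a genuine obstacle here: both parts are short once one hits on the reflection point $2\xi-x$ in \refp{lem:1:a}. The only points requiring care are checking that the reflected point actually lies in $R$ — which is precisely where up-monotonicity is used — and that it is distinct from $x$, so that $\xi$ sits strictly inside a nondegenerate segment; and, in \refp{lem:1:b}, tracking the sign of $t-s$ so that the correct inequality ($\xi\le\eta$ or $\eta\le\xi$) is deduced before invoking \refp{lem:1:a}.
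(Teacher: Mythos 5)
Your proof is correct. The paper deliberately omits its own proof of this lemma (it is listed among the ``basic results that are either simple to prove or well-known''), so there is nothing to compare against; your reflection argument $y=2\xi-x$ for \refp{lem:1:a} and the parametrization of $\seg$ reducing \refp{lem:1:b} to \refp{lem:1:a} are exactly the standard intended arguments, and both uses of up-monotonicity and of the distinctness of $\xi$ and $\eta$ are accounted for.
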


The following proposition is fundamental to our work.

\begin{propo}\label{propo:adj}
If $\vsr = \{\xi = \zeta^1, \eta = \zeta^2,\dots,\zeta^r\}$,
then the following are equivalent:
\begin{enuma}
\item\label{propo:adj:a}
$\xi$ and $\eta$ are adjacent in $R$, that is, there exist $c\in\RR^n$ and $b\in\RR$ such that
$c \cdot x \ge b$
for all $x\in R$,
with equality if and only if $x\in \seg$.

\item\label{propo:adj:b}
There exist $c\in\RR^n$ and $b\in\RR$ such that $c > \zero$ and
\begin{equation}\label{equ:adj:b}
   c \cdot x \ge b  \text{ for all $x\in\conv(\vsr)$, with equality if and only if $x\in \seg$.}
\end{equation}

\item\label{propo:adj:c}
If $x + \mu\in\seg$, where $\mu\ge\zero$ and $x$ is a convex combination of the form $\sum_{k=1}^r \lambda_k \zeta^k$, then
$\lambda_k = 0$ for $k = 3,\dots,r$ (so $x\in\seg$ and $\mu = \zero$).
\end{enuma}
\end{propo}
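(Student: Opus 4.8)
The plan is to prove the cycle of implications $(a)\Rightarrow(c)\Rightarrow(b)\Rightarrow(a)$, since $(b)\Rightarrow(a)$ is essentially immediate and $(a)\Rightarrow(c)$ is a short argument using up-monotonicity together with \tref{Lemma}{lem:1}. For $(b)\Rightarrow(a)$, given $c>\zero$ and $b$ satisfying \refp{equ:adj:b}, I would observe that for any $x\in R$ we can write $x=y+\mu$ with $y\in\conv(\vsr)$ and $\mu\ge\zero$; then $c\cdot x=c\cdot y+c\cdot\mu\ge b$, with equality forcing $c\cdot\mu=0$, hence $\mu=\zero$ (as $c>\zero$), and also $y\in\seg$, so $x=y\in\seg$. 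Thus the very same $c,b$ witness adjacency in all of $R$, giving $(a)$.

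For $(a)\Rightarrow(c)$, suppose $c,b$ define the face $\seg$ of $R$ as in $(a)$. Take $x=\sum_{k=1}^r\lambda_k\zeta^k$ and $\mu\ge\zero$ with $x+\mu\in\seg$. Since $x\in\conv(\vsr)\subseteq R$ and $x+\mu\in R$ with $x\le x+\mu$, applying the minimality statement \trefp{Lemma}{lem:1}{lem:1:b} style reasoning (or directly: $c\cdot x\ge b$ and $c\cdot(x+\mu)=b$ give $c\cdot\mu\le0$) is not quite enough without $c>\zero$; instead I use that $x+\mu\in\seg$ means $c\cdot(x+\mu)=b$, while $c\cdot x=\sum_k\lambda_k\,c\cdot\zeta^k\ge b$ because each $\zeta^k\in R$. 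Combining, $c\cdot\mu\le 0$. Now I need $\mu=\zero$; here I invoke \trefp{Lemma}{lem:1}{lem:1:b} directly, which says exactly that if $x\in\seg$ and $x+\mu\in\seg$ with $\mu\ge\zero$ then $\mu=\zero$ — but first I must know $x\in\seg$. For that: $c\cdot x\ge b$ and $c\cdot x=b+c\cdot\mu\le b$ (wait, sign) — more carefully, $b=c\cdot(x+\mu)=c\cdot x+c\cdot\mu\ge b+c\cdot\mu$, so $c\cdot\mu\le0$; combined with $c\cdot x\ge b$ this forces $c\cdot x=b$ only if $c\cdot\mu=0$, which we don't yet have. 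So the clean route is: from $c\cdot x=\sum\lambda_k c\cdot\zeta^k$ with all $c\cdot\zeta^k\ge b$ and $\sum\lambda_k=1$, equality $c\cdot x=b$ holds iff $\lambda_k>0\Rightarrow c\cdot\zeta^k=b\Rightarrow\zeta^k\in\seg\Rightarrow\zeta^k\in\{\xi,\eta\}$. And equality $c\cdot x=b$ does hold because $b\le c\cdot x=c\cdot(x+\mu)-c\cdot\mu=b-c\cdot\mu$ forces $c\cdot\mu\le 0$, while separately $c\cdot\mu\ge$ ... — this last point is the gap, so the honest fix is to first prove $(a)\Rightarrow(b)$ and run the argument with $c>\zero$, which makes $c\cdot\mu\ge0$ automatic and closes everything. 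Hence I will actually order things as $(a)\Rightarrow(b)\Rightarrow(c)\Rightarrow(a)$, or prove $(a)\Rightarrow(b)$ as a separate lemma-like step.

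Thus the real crux is $(a)\Rightarrow(b)$: upgrading an arbitrary valid inequality defining the edge $\seg$ of $R$ to one with strictly positive coefficient vector. This is where up-monotonicity is essential. I would argue: let $c,b$ be as in $(a)$. Up-monotonicity of $R$ forces $c\ge\zero$ (if $c_i<0$ for some $i$, then moving from a point of $R$ in the $\ee_i$ direction stays in $R$ but decreases $c\cdot x$ below $b$, a contradiction). To make it strict, pick any $\varepsilon>0$ and set $c'=c+\varepsilon\one$, $b'=b+\varepsilon\,t$ where $t=\one\cdot\xi=\one\cdot\eta$? — no, $\xi$ and $\eta$ need not have equal coordinate sums. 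Instead: the face $\seg$ being an edge, it has dimension $1$, so there are $n-1$ linearly independent facet-defining inequalities of $R$ tight on $\seg$; adding a small positive multiple of a strictly-positive valid inequality for $R$ (for instance $\sum_i x_i\ge\delta$ for suitable $\delta$, valid since $R\subseteq\{x\ge\zero\}$ and $R$ is bounded below, or more robustly using that $\conv(\vsr)$ is a polytope so $\one\cdot x$ attains a minimum $m$ on it) to $c\cdot x\ge b$ yields $(c+\theta\one)\cdot x\ge b+\theta m$, valid on $\conv(\vsr)$, with equality exactly when both $c\cdot x=b$ and $\one\cdot x=m$. This is too restrictive unless $\seg$ lies in the face $\one\cdot x=m$. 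The correct device is standard: since $\seg$ is an edge, by \refp{equ:adj:b}'s equivalence with $(c)$ being what we want, I instead take $c$ from $(a)$, note $c\ge\zero$, and perturb within the normal cone of the edge — the normal cone of an edge of a polytope is full-dimensional, hence meets the open positive orthant, which gives the desired $c>\zero$. I expect writing this normal-cone argument cleanly (or equivalently, a direct $\varepsilon$-perturbation keeping the exposed face equal to $\seg$ on $\conv(\vsr)$) to be the main obstacle; everything else is bookkeeping with convex combinations and \tref{Lemma}{lem:1}.
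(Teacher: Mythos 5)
Your proposal never establishes any implication \emph{out of} condition (c), and that is precisely where the substance of the proposition lies. You prove (b)$\Rightarrow$(a) cleanly, you sketch (b)$\Rightarrow$(c) (the discussion of (a)$\Rightarrow$(c) reduces, as you note, to first getting $c>\zero$ and then running the easy argument), and you identify (a)$\Rightarrow$(b) as a crux. But whichever way you order the cycle, you still owe (c)$\Rightarrow$(a) or (c)$\Rightarrow$(b), and this direction is not mentioned anywhere: what you have at best is (a)$\Leftrightarrow$(b)$\Rightarrow$(c), with (c) left dangling. This missing link is exactly the one the paper devotes its entire written proof to: arguing by contradiction, if $\xi$ and $\eta$ are not adjacent the minimal face of $R$ containing them has dimension at least $2$, so one finds $y',y''\in R\setminus\seg$ with a convex combination in $\seg$; writing $y'=x'+\mu'$, $y''=x''+\mu''$ with $x',x''\in\conv(\vsr)$ and $\mu',\mu''\ge\zero$ (up-monotonicity) and applying (c) twice forces $y',y''\in\seg$, a contradiction. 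Passing from the purely combinatorial condition (c) to the existence of a supporting hyperplane cannot be bypassed, so as it stands the proposal does not prove the equivalence.

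A secondary, repairable flaw is in your (a)$\Rightarrow$(b) sketch: the normal cone of an edge of a polytope in $\RR^n$ has dimension $n-1$, not $n$, and even a full-dimensional cone need not meet the open positive orthant, so the stated justification fails. The correct device is close to what you wrote: from (a) and up-monotonicity one gets $c\ge\zero$, and $c$ lies in the relative interior of the normal cone of the face $\seg$ of $\conv(\vsr)$, whose linear span is the hyperplane $(\xi-\eta)^{\perp}$. By \trrefp{Lemma}{lem:1}{lem:1:a} neither $\xi\le\eta$ nor $\eta\le\xi$, so $\xi-\eta$ has entries of both signs and $(\xi-\eta)^{\perp}$ contains some $v>\zero$; then $c+\varepsilon v$ for small $\varepsilon>0$ is strictly positive, still satisfies $(c+\varepsilon v)\cdot\xi=(c+\varepsilon v)\cdot\eta$, and keeps the finitely many strict inequalities $(c+\varepsilon v)\cdot\zeta^k>(c+\varepsilon v)\cdot\xi$ for $k\ge 3$. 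The paper dismisses (a)$\Rightarrow$(b) and (b)$\Rightarrow$(c) as easy and proves only (c)$\Rightarrow$(a); your effort is concentrated on the two links the paper omits, while the one it actually proves is absent.
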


\begin{proof}
It is easy to show that~\refp{propo:adj:a} implies~\refp{propo:adj:b} and that~\refp{propo:adj:b} implies~\refp{propo:adj:c}. Thus, we next show only that~\refp{propo:adj:c} implies~\refp{propo:adj:a}. We do this by contradiction, so assume \refp{propo:adj:c} holds but $\xi$ and $\eta$ are not adjacent in $R$. Then, the minimal face of $R$ containing $\xi$ and $\eta$ has dimension at least 2. It follows that there exist two points $y',y''\in R$ and $\lambda \in \RR$ such that $0 < \lambda < 1$, $\lambda y' + (1 - \lambda)\,y''\in\seg$ and neither $y'$ nor $y''$ belong to $\seg$. Since $R$ is up-monotone, we can find $x'$ and $x''$ in $\conv(\vsr)$ and $\mu',\mu''\ge\zero$ such that $y' = x' + \mu'$ and $y'' = x'' + \mu''$. Writing $x = \lambda x' + (1 - \lambda)\,x''$ and $\mu = \lambda \mu' + (1 - \lambda)\,\mu''$, we have $x\in\conv(\vsr)$, $\mu\ge\zero$, and $x+\mu = \lambda y' + (1 - \lambda)\,y'' \in \seg$,  so by~\refp{propo:adj:c} we conclude that $\mu = \zero = \mu' = \mu''$, that is, $y'=x'$ and  $y''=x''$. Since $x'$ and $x''$ are in $\conv(\vsr)$, it follows that 
\[
   y'=x'= \sum_{k=1}^r \tau'_k \zeta^k,
   \quad
   y''=x'' = \sum_{k=1}^r \tau''_k \zeta^k,
   \quad
   x = \sum_{k=1}^r
      \big(\lambda \tau'_k + (1 - \lambda)\,\tau''_k\big) \zeta^k,
\]
and again by~\refp{propo:adj:c} 
we must have
\[
   \lambda \tau'_k + (1 - \lambda)\,\tau''_k = 0
   \quad\text{for $k\ne 1,2$,}
\]
i.e., $\tau'_k = \tau''_k = 0$ for $k\ne 1,2$.
Hence, $y'$ and $y''$
are convex combinations of $\xi$ and $\eta$, that is, they are in $\seg$, contradicting the way they have been chosen above.
\end{proof}

\section{Bounding with the unit hypercube}
\label{sec:bound}

We now turn our attention to
studying the relationship between the vertices of the up-monotone polyhedron $R$ and those of $R\cap\cube$.

We omit the proof of the following simple result relating the vertices of two polyhedra in a somewhat more general setting.

\begin{lem}\label{lem:3}
Let $S$ and $T$ be polyhedra such that $S\subset T$.
We have:
\begin{enuma}
\item\label{lem:3:1}
If $\xi\in\vs(T)\cap S$ then $\xi\in\vs(S)$.

\item\label{lem:3:2}
If $\xi$ and $\eta$ are distinct points in $\vs(T)\cap S$ which are adjacent in $T$, then they are also adjacent in $S$.
\end{enuma}
\end{lem}

In the remainder of this section, we will assume that
$R$ is an up-monotone polyhedron satisfying
\begin{subequations}\label{equs:hipo}
\begin{equation}
  \label{equ:vsr}
  \vsr\subset\cube,
\end{equation}
and
$P$ is defined by
\begin{equation}
   \label{equ:p}
   P = R\cap\cube,
\end{equation}
\end{subequations}
so that $\vsr\subset P$.

We will find it convenient to consider the function $\vphi: \parts\times \RR^n \to \RR^n$ defined component-wise by
\begin{equation}\label{equ:phi}
   \phix_i = \begin{cases}
      1 & \text{if $i\in I$,} \\
      x_i & \text{otherwise,}
      \end{cases}
\end{equation}
that is, a projection for each $I\in\parts$.
Notice that $\phix = x$ if $I$ is empty,
and that if $x\in P = R\cap\cube$ then $x\le\phix$ and $\phix\in P$ because $R$ is up-monotone.

The next result says that any vertex of $P$ can be obtained by ``lifting'' a vertex of $R$ via $\vphi$.

\begin{thm}\label{thm:lift}
If $\vphi$ is defined by~\eqref{equ:phi}, then
\begin{equation}\label{eq:lift}
\vsp \subset \{\phiz \mid I\in \parts, \zeta\in\vsr\}.
\end{equation}
\end{thm}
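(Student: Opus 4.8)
The plan is to take a vertex $v$ of $P$, decompose it through the up‑monotone structure of $R$, and lift the pieces back into $P$ using the projections $\vphi$. Concretely, I would fix $v\in\vsp$ and, by the second characterization of up‑monotonicity, write $v=y+\mu$ with $y\in\conv(\vsr)$ and $\mu\ge\zero$, expanding $y$ as a convex combination $y=\sum_{k}\lambda_k\zeta^k$ with $\zeta^k\in\vsr$, $\lambda_k>0$, $\sum_k\lambda_k=1$. I would then let $I=\{i\in\II\mid v_i=1\}$ be the set of coordinates in which $v$ meets the upper facet of the hypercube, so that $v_i<1$ for $i\notin I$.

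The key step is to show that $\mu_i=0$ for every $i\notin I$. Granting this, $\vphi(I,y)=v$, since the two vectors agree on $I$ (both equal $1$ there) and, for $i\notin I$, one has $\vphi(I,y)_i=y_i=y_i+\mu_i=v_i$. To prove the vanishing I would argue by contradiction: if $\mu_{i_0}>0$ for some $i_0\notin I$, then $0<\mu_{i_0}\le v_{i_0}<1$, and for a sufficiently small $\epsilon>0$ both $v+\epsilon\ee_{i_0}$ and $v-\epsilon\ee_{i_0}$ lie in $P$ — the former because $R$ is up‑monotone and $v_{i_0}+\epsilon\le 1$, the latter because $v-\epsilon\ee_{i_0}=y+(\mu-\epsilon\ee_{i_0})$ with $\mu-\epsilon\ee_{i_0}\ge\zero$ (so it is still in $R$) and $v_{i_0}-\epsilon\ge y_{i_0}\ge 0$. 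This exhibits $v$ as the midpoint of two distinct points of $P$, contradicting $v\in\vsp$.

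To finish, I would use that $\vphi(I,\cdot)$ is affine and that $\vphi(I,\zeta^k)\in P$ for each $k$: indeed $\vphi(I,\zeta^k)\ge\zeta^k$ gives $\vphi(I,\zeta^k)\in R$ by up‑monotonicity, and $\vphi(I,\zeta^k)\in\cube$ because each coordinate of $\vphi(I,\zeta^k)$ is either $1$ or a coordinate of $\zeta^k\in\vsr\subset\cube$. Hence $v=\vphi(I,y)=\sum_k\lambda_k\,\vphi(I,\zeta^k)$ is a convex combination of points of $P$, and since $v$ is a vertex of $P$ this forces $\vphi(I,\zeta^k)=v$ for every $k$; taking $\zeta=\zeta^1$ yields $v=\vphi(I,\zeta)$ with $I\in\parts$ and $\zeta\in\vsr$, which is exactly \eqref{eq:lift}.

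The only delicate point is the vanishing of $\mu$ outside $I$; everything else is bookkeeping with the definition of $\vphi$ and the up‑monotonicity of $R$, and no appeal to the earlier lemmas is needed. I would also flag the tempting but doomed shortcut of trying to prove $y=v$ directly: this is false in general — for instance when $\one$ is a vertex of a bounded set covering polytope, as in the circulant example of \tref{Section}{sec:intro} — which is precisely why one must lift through $\vphi$ rather than argue with $y$ itself.
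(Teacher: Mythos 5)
Your proof is correct, and it takes a genuinely different route from the paper's. The paper argues by induction, intersecting $R$ with the half-spaces $\{x\in\RR^n\mid x_k\le 1\}$ one at a time and identifying each newly created vertex as the intersection of the hyperplane $\{x_k=1\}$ with the relative interior of a (necessarily unbounded) edge of the previous polyhedron, which up-monotonicity forces to be a ray in a coordinate direction. You instead work directly with a fixed vertex $v$ of $P$: you decompose $v=y+\mu$ with $y\in\conv(\vsr)$ and $\mu\ge\zero$ via the second characterization of up-monotonicity, kill $\mu$ outside $I=\{i\mid v_i=1\}$ by the two-sided perturbation $v\pm\epsilon\,\ee_{i_0}$ (the inequalities $0<\mu_{i_0}\le v_{i_0}<1$ do guarantee both perturbed points stay in $P$, so extremality is genuinely contradicted), and then collapse the convex combination $v=\vphi(I,y)=\sum_k\lambda_k\,\phizk$ using that each $\phizk\in P$ and $v$ is a vertex. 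Every step checks out, including the observation that $\vphi(I,\cdot)$ commutes with convex combinations and that $\phizk\in\cube$ relies on the standing hypothesis $\vsr\subset\cube$. Your argument is more self-contained and elementary: it avoids the paper's somewhat informally justified claims about the edge structure of the intermediate polyhedra $P_{k-1}$, at the cost of losing the geometric picture of vertices being created one facet at a time. Your closing caveat that one cannot hope for $\mu=\zero$ in general is also correct; the vertex $\one$ of $\qab$ in the circulant example of the introduction already shows this.
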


\begin{proof}
Observe that $P = R\cap \{x\in\RR^n \mid x\le\one\}$ because we have assumed $R\subset \{x\in\RR^n \mid x\ge\zero\}$. Then, to prove the theorem it is enough to show that any polyhedron in the sequence: $P_0=R$ and $P_k = P_{k-1} \cap \{x\in\RR^n \mid x_k\le 1\}$ for $k\in \II$, satisfies~\eqref{eq:lift}. We do this by induction. Since $\phix = x$ if $I$ is empty, it is obvious that $P_0$ satisfies~\eqref{eq:lift}. So assume now that $P_{k-1}$ satisfies~\eqref{eq:lift}. Note that the vertices of $P_{k}$ which are not vertices of $P_{k-1}$ coincide with the intersections consisting of a single point of the hyperplane $\{x\in\RR^n \mid  x_{k} = 1\}$ with the relative interior of edges of $P_{k-1}$. Besides, observe that the relative interior of no bounded edge of $P_{k-1}$ can intersect $\{x\in\RR^n \mid  x_{k} = 1\}$ in a single point because that would imply $\xi_k> 1$ for some vertex $\xi$ of $P_{k-1}$, contradicting that $V(P_{k-1})\subset \cube$ (which follows from the fact that $P_{k-1}$ satisfies~\eqref{eq:lift} and $R$ satisfies~\eqref{equ:vsr}). Thus, any vertex of $P_{k}$ which is not a vertex of $P_{k-1}$ is given by the intersection of the relative interior of an unbounded edge of $P_{k-1}$ with $\{x\in\RR^n \mid  x_{k} = 1\}$. Since $R$ is up-monotone, any unbounded edge of $P_{k-1}$ is of the form $\{\xi + \gamma \ee_h \mid \gamma \ge 0\}$, where $\xi$ is a vertex of $P_{k-1}$ and $h\in\{k,\ldots ,n\}$. This completes the proof, because when the intersection of $\{\xi + \gamma \ee_h \mid \gamma \ge 0\}$ with $\{x\in\RR^n \mid  x_{k} = 1\}$ is not empty (i.e., when $h= k$), it consists of a point which can be obtained replacing the $h$-th component of $\xi$ by a one.\qedhere
\end{proof}

The following is an immediate consequence of \tref{Theorem}{thm:lift}.

\begin{coro}\label{coro:ricardo:1}
Suppose
$R$ and $P$ verify~\eqref{equs:hipo} and
$S$ is a polyhedron verifying $\vsr\subset S\subset R$ and
\[ \phiz\in S \text{ for all $I\in\parts$ and $\zeta\in\vsr$.} \]
Then $\vsp\subset S$, that is, $P = R\cap\cube \subset S$.
\end{coro}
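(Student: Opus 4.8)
The plan is to obtain this as an immediate consequence of \tref{Theorem}{thm:lift} together with two elementary observations. First I would note that $P = R\cap\cube$ is bounded, since $P\subset\cube$, and hence is a polytope; consequently $P = \conv(\vsp)$. (A general polyhedron equals the convex hull of its vertices together with its recession cone, but the latter is trivial here, so this is the only place where boundedness of $P$ intervenes.)

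Next, \tref{Theorem}{thm:lift} gives
\[
   \vsp \subset \{\phiz \mid I\in\parts,\ \zeta\in\vsr\},
\]
and by hypothesis every element of the set on the right lies in $S$, so $\vsp\subset S$. Since $S$ is a polyhedron it is convex, and therefore $P = \conv(\vsp)\subset S$, which is precisely the asserted inclusion; the ``that is'' reformulation merely records that $P = R\cap\cube$, and the reverse implication (if $P\subset S$ then $\vsr\subset\vsp\subset S$, using \tref{Lemma}{lem:3}\refp{lem:3:1} to see $\vsr\subset\vsp$) is trivial.

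I do not anticipate any real obstacle: all the content is carried by \tref{Theorem}{thm:lift}. It is worth remarking that two of the hypotheses are redundant for this particular argument — the requirement $\vsr\subset S$ is subsumed by $\phiz\in S$ upon taking $I=\emptyset$, for which $\vphi(I,\zeta)=\zeta$, and the inclusion $S\subset R$ is not used at all in deducing $P\subset S$; presumably these are kept because in the intended applications $S$ is a polyhedron sandwiched between $\vsr$ (or its $\vphi$-lifts) and $R$.
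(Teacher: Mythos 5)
Your argument is correct and is exactly the reasoning the paper intends (it states the corollary as an immediate consequence of \tref{Theorem}{thm:lift} without writing a proof): $P$ is a nonempty polytope, hence $P=\conv(\vsp)$, and $\vsp\subset S$ by the theorem plus the hypothesis on the lifts, so convexity of $S$ gives $P\subset S$. Your side remarks about the redundancy of $\vsr\subset S$ (take $I=\emptyset$) and the non-use of $S\subset R$ in this direction are also accurate.
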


So far we have not assumed the integrality of the vertices of $R$, and, for instance, \tref{Theorem}{thm:lift} may be applied to
$R = \qa$ (defined in~\eqref{equ:qa})
and $P = \qab$ (defined in~\eqref{equ:qab}).

Before studying the case $\vsr\subset\BB^n$, where $\BB = \{0,1\}$ denotes the set of binary numbers,  let us state without proof some simple properties relating $\vphi$, binary points and vertices of $R$ and $P = R\cap\cube$.

\begin{lem}\label{lem:vsp}
In the following we assume $I\in \parts$.
\begin{enuma}
\item\label{lem:vsp:a}
If $x\in \BB^n$ then $\phix\in\BB^n$.

\item\label{lem:vsp:b}
If $x\in P\cap\BB^n$ then $x\in\vsp$.

\item\label{lem:vsp:c}
If $x\in R\cap\BB^n$ then $\phix\in\vsp$.

\end{enuma}
\end{lem}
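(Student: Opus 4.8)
The plan is to prove the three statements of \tref{Lemma}{lem:vsp} in sequence, each one feeding into the next; all three are essentially bookkeeping with the definition \eqref{equ:phi} of $\vphi$ together with the characterization of $R$ as up-monotone and the hypotheses \eqref{equs:hipo}.

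For part \refp{lem:vsp:a}, I would simply read off the definition: the $i$-th component of $\phix$ is either $1$ (when $i \in I$) or $x_i$; if $x \in \BB^n$ then in both cases the value lies in $\{0,1\}$, so $\phix \in \BB^n$. This is immediate and needs no further input.

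For part \refp{lem:vsp:b}, the key point is that a binary point of the cube $\cube$ is a vertex of $\cube$, hence an extreme point of any subpolytope of the cube that contains it. Concretely, if $x \in P \cap \BB^n$ and $x = \tfrac12(y+z)$ with $y,z \in P \subseteq \cube$, then for each coordinate $i$ with $x_i = 0$ we get $y_i = z_i = 0$ (both are $\ge 0$ and average to $0$) and for each $i$ with $x_i = 1$ we get $y_i = z_i = 1$ (both are $\le 1$ and average to $1$); hence $y = z = x$, so $x$ cannot be written as a nontrivial convex combination of points of $P$, i.e., $x \in \vsp$. Alternatively one can invoke \trefp{Lemma}{lem:3:1} with $T = \cube$ and $S = P$, since $x \in \vs(\cube) \cap P$.

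For part \refp{lem:vsp:c}, I would combine the previous two parts with the up-monotonicity of $R$. Given $x \in R \cap \BB^n$, part \refp{lem:vsp:a} gives $\phix \in \BB^n$; since $\phix \ge x$ componentwise and $R$ is up-monotone, $\phix \in R$; and since $\phix$ has all components in $\{0,1\}$ it satisfies $\zero \le \phix \le \one$, so $\phix \in R \cap \cube = P$. Thus $\phix \in P \cap \BB^n$, and part \refp{lem:vsp:b} yields $\phix \in \vsp$.

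None of the three parts presents a genuine obstacle; the only point requiring a moment's care is the argument in \refp{lem:vsp:b} that a $0/1$ point of a cube-contained polytope is extreme, and even that is the standard fact that vertices of the cube remain vertices in any subpolytope containing them (as recorded in \trefp{Lemma}{lem:3:1}). The remark in the text that the authors ``state without proof'' these properties confirms that each step is meant to be routine.
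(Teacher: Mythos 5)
Your proof is correct, and since the paper deliberately states this lemma without proof (``let us state without proof some simple properties\dots''), your routine argument---reading off the definition of $\vphi$ for \refp{lem:vsp:a}, observing that a binary point is a vertex of $\cube$ and applying \trrefp{Lemma}{lem:3}{lem:3:1} for \refp{lem:vsp:b}, and combining these with up-monotonicity for \refp{lem:vsp:c}---is exactly the intended one. No gaps.
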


The following result characterizes the vertices of $P$ when the vertices of $R$ are binary.

\begin{coro}\label{coro:bin1}
If $\vsr\subset\BB^n$,
$P = R\cap\cube$,
and $\vphi$ is defined by~\eqref{equ:phi}, then
\[ \vsp = \{\phiz \mid I\in \parts, \zeta\in\vsr\}. \]
\end{coro}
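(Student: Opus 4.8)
The plan is to prove the two inclusions separately, using the results already established for the general (non-binary) case together with the new binary hypotheses.

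For the inclusion $\vsp \subset \{\phiz \mid I\in\parts,\ \zeta\in\vsr\}$, I would simply invoke \tref{Theorem}{thm:lift}, which gives exactly this containment under the weaker hypothesis $\vsr\subset\cube$. Since $\BB^n\subset\cube$, the hypothesis $\vsr\subset\BB^n$ implies $\vsr\subset\cube$, so \tref{Theorem}{thm:lift} applies verbatim and this direction is immediate.

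For the reverse inclusion $\{\phiz \mid I\in\parts,\ \zeta\in\vsr\} \subset \vsp$, fix $I\in\parts$ and $\zeta\in\vsr$. Since $\zeta$ is a vertex of $R$ it lies in $R$, and by hypothesis $\zeta\in\BB^n$, so $\zeta\in R\cap\BB^n$. Now apply \trefp{Lemma}{lem:vsp}{lem:vsp:c}, which states precisely that $x\in R\cap\BB^n$ implies $\phix\in\vsp$; taking $x=\zeta$ gives $\phiz\in\vsp$. This establishes the reverse inclusion, and combining the two gives the claimed equality.

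I do not expect any real obstacle here: the corollary is essentially a repackaging of \tref{Theorem}{thm:lift} (for one inclusion) and of \trefp{Lemma}{lem:vsp}{lem:vsp:c} (for the other), the latter itself resting on the elementary closure properties of $\vphi$ with respect to $\BB^n$ recorded in \tref{Lemma}{lem:vsp}. The only point worth a word of care is noting that the binary hypothesis on $\vsr$ is what makes \tref{Lemma}{lem:vsp} applicable, whereas \tref{Theorem}{thm:lift} needs nothing beyond $\vsr\subset\cube$; so the strengthening from ``$\subset$'' to ``$=$'' is exactly the content of the extra integrality assumption.
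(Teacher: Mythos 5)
Your proposal is correct and matches the paper's own proof exactly: one inclusion from Theorem~\ref{thm:lift} and the other from Lemma~\ref{lem:vsp}\,(\ref{lem:vsp:c}). No issues.
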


\begin{proof}
One inclusion is given by \trrefp{Lemma}{lem:vsp}{lem:vsp:c},
and the other one by \tref{Theorem}{thm:lift}.\qedhere
\end{proof}

Using \tref{Lemma}{lem:vsp}, it is easy to see now that all vertices of $R\cap\cube$ are binary.

\begin{coro}\label{coro:bin2}
If $\vsr\subset\BB^n$ and $P = R\cap \cube$, then $\vsp\subset\BB^n$.
\end{coro}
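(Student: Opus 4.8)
\section*{Proof proposal for Corollary \ref{coro:bin2}}

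The plan is to read off the result directly from the characterization of $\vsp$ already obtained. First I would invoke \tref{Corollary}{coro:bin1}: since $\vsr\subset\BB^n$ and $P = R\cap\cube$, that corollary gives the exact description
\[
   \vsp = \{\phiz \mid I\in\parts,\ \zeta\in\vsr\}.
\]
So to show $\vsp\subset\BB^n$ it suffices to check that each point $\phiz$ appearing on the right-hand side is binary.

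For that, I would apply \trrefp{Lemma}{lem:vsp}{lem:vsp:a}, which states that $\phix\in\BB^n$ whenever $x\in\BB^n$. Since by hypothesis every $\zeta\in\vsr$ satisfies $\zeta\in\BB^n$, we get $\phiz\in\BB^n$ for every $I\in\parts$ and every $\zeta\in\vsr$. Combining this with the displayed equality for $\vsp$ yields $\vsp\subset\BB^n$, which is the claim.

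There is essentially no obstacle here: the statement is an immediate corollary, and the only ingredients are \tref{Corollary}{coro:bin1} (which repackages \tref{Theorem}{thm:lift} together with \trrefp{Lemma}{lem:vsp}{lem:vsp:c}) and the elementary closure property \trrefp{Lemma}{lem:vsp}{lem:vsp:a} of the projection $\vphi$ on the set of binary points. If one wanted to avoid citing \tref{Corollary}{coro:bin1} and argue from \tref{Theorem}{thm:lift} alone, the same two-line argument works: \tref{Theorem}{thm:lift} gives $\vsp\subset\{\phiz \mid I\in\parts,\ \zeta\in\vsr\}$, which is already enough once \trrefp{Lemma}{lem:vsp}{lem:vsp:a} is applied.
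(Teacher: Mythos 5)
Your proposal is correct and matches the paper's intent: the paper states the result immediately after \tref{Corollary}{coro:bin1} with the remark that it follows ``using \tref{Lemma}{lem:vsp}'', which is precisely your combination of the vertex description $\vsp = \{\phiz \mid I\in\parts,\ \zeta\in\vsr\}$ with the closure property \trrefp{Lemma}{lem:vsp}{lem:vsp:a}. Nothing is missing.
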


We come now to the main result of this work.

\begin{thm}\label{thm:adj}
Assume $\vsr\subset\BB^n$,
$P = R\cap\cube$,
and $\xi$ and $\eta$ are distinct vertices of $R$.

Then, $\xi$ and $\eta$ are adjacent in $P$ if and only if they are adjacent in $R$.
\end{thm}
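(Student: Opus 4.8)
The plan is to prove the two directions separately, the forward one being essentially free. If $\xi$ and $\eta$ are adjacent in $R$, then both are vertices of $R$ lying in $P$ (because $\vsr\subseteq P\subseteq R$), so \tref{Lemma}{lem:3} at once makes them adjacent vertices of $P$. For the converse I would assume $\xi$ and $\eta$ adjacent in $P$ and verify condition~\refp{propo:adj:c} of \tref{Proposition}{propo:adj} for $R$.

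The crucial preliminary step is to choose a supporting functional for the edge $\seg$ of $P$ with a sign restriction: setting $I_1:=\{i\in\II\mid\xi_i=\eta_i=1\}$, I claim one can find $d\in\RR^n$ and $\beta\in\RR$ with $d\cdot x\ge\beta$ for all $x\in P$, equality holding precisely on $\seg$, and with $d_i\ge 0$ whenever $i\notin I_1$. The reason is that $R$ is up-monotone, so its recession cone is the nonnegative orthant and every facet-defining inequality of $R$ has the shape $a\cdot x\ge b$ with $a\ge\zero$; hence the facets of $P=R\cap\cube$ come in two kinds — those with nonnegative inner normal (coming from $R$, or from a cube facet $x_i\ge0$) and those lying in a hyperplane $x_i=1$ — and a hyperplane $x_i=1$ contains $\seg$ only when $i\in I_1$. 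Adding up the inequalities of $P$ that are tight on $\seg$ then gives $(d,\beta)$, the only negative contributions being terms $-\ee_i$ with $i\in I_1$, as claimed.

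Granting this, let $\mu\ge\zero$ and let $x=\sum_{k=1}^{r}\lambda_k\zeta^k$ be a convex combination of $\vsr=\{\xi=\zeta^1,\eta=\zeta^2,\dots,\zeta^r\}$ with $x+\mu\in\seg$; the goal is $\lambda_k=0$ for $k\ge 3$. One checks first that $x\in P$ (since $\zero\le x\le x+\mu\le\one$), and then replaces $x$ by $\bar x:=\vphi(I_1,x)$. As $\vphi(I_1,\cdot)$ is affine and sends $\vsr$ into $\vsp$ by \tref{Corollary}{coro:bin1}, $\bar x=\sum_k\lambda_k\,\vphi(I_1,\zeta^k)$ is a convex combination of vertices of $P$; here $\vphi(I_1,\xi)=\xi$ and $\vphi(I_1,\eta)=\eta$, while for $k\ge 3$ the (binary) vertex $\vphi(I_1,\zeta^k)$ is neither $\xi$ nor $\eta$ — for instance $\vphi(I_1,\zeta^k)=\xi$ would force $\zeta^k\le\xi$ and hence $\zeta^k=\xi$ by \trrefp{Lemma}{lem:1}{lem:1:a}. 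Since $\xi\ne\eta$, the only binary points of $\seg$ are $\xi$ and $\eta$, so $\vphi(I_1,\zeta^k)\notin\seg$ and therefore $d\cdot\vphi(I_1,\zeta^k)>\beta$ for $k\ge 3$. Finally, $\mu':=(x+\mu)-\bar x$ satisfies $\mu'\ge\zero$ and $\mu'_i=0$ for $i\in I_1$ (on $I_1$ both $x+\mu\in\seg$ and $\bar x$ have coordinate $1$), whence $d\cdot\mu'=\sum_{i\notin I_1}d_i\mu'_i\ge 0$. Putting these together,
\[
   \beta=d\cdot(x+\mu)=\sum_{k=1}^{r}\lambda_k\,d\cdot\vphi(I_1,\zeta^k)+d\cdot\mu'\ge\beta,
\]
so equality holds throughout and $\lambda_k=0$ for every $k\ge 3$, which is exactly~\refp{propo:adj:c}; by \tref{Proposition}{propo:adj}, $\xi$ and $\eta$ are adjacent in $R$.

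The real obstacle is the sign-restricted supporting functional of the second step: the cube facets $x_i\le 1$ enter with negative normals, and the final chain of inequalities breaks down unless that negativity is confined to $I_1$. The point of passing from $x$ to $\vphi(I_1,x)$ is precisely that it makes the residual vector $\mu'$ vanish on $I_1$, so the sign information becomes usable; everything else is routine bookkeeping with \tref{Lemma}{lem:1}, \tref{Lemma}{lem:vsp}, and \tref{Corollary}{coro:bin1}.
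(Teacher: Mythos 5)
Your proof is correct, and its overall architecture coincides with the paper's: the easy direction via \trrefp{Lemma}{lem:3}{lem:3:2}, and the hard direction by verifying condition \refp{propo:adj:c} of \tref{Proposition}{propo:adj}, lifting the vertices $\zeta^k$ with the map $\vphi$ on the coordinates where the supporting functional may be negative, and pulling back from $\vphi(\cdot,\zeta^k)\in\conj$ to $\zeta^k\in\conj$ via \trrefp{Lemma}{lem:1}{lem:1:a}. The one step you handle genuinely differently is the sign-controlled supporting functional. The paper starts from an arbitrary certificate $c\cdot x\ge b$ of adjacency in $P$, sets $I=\{i\mid c_i\le 0\}$, and uses up-monotonicity (for $x\in\seg$ one has $\phix\in P$, hence $c\cdot\phix\ge b$, forcing $c\cdot\phix=b$ and then $x=\phix$ by \trrefp{Lemma}{lem:1}{lem:1:b}) to deduce a posteriori that $x_i=1$ on $\seg$ for all $i\in I$, i.e.\ that the negative coordinates of $c$ are confined to your set $I_1$. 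You instead construct the functional from a linear description of $P$, using that all facet normals of $R$ are nonnegative (its recession cone is the nonnegative orthant) and that a constraint $x_i\le 1$ is tight on $\seg$ only for $i\in I_1$, then summing the tight constraints; this is sound, but it leans on slightly heavier polyhedral machinery (faces as intersections of tight constraints) than the paper's self-contained computation, in exchange for making the geometric reason for the sign restriction more transparent. The remaining bookkeeping (your $\bar x+\mu'=x+\mu$ versus the paper's $y+\tau=z+\mu$) is the same calculation with $I_1$ in place of $I$.
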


\begin{proof}
Since $\vsr\subset R \cap \BB^n\subset  R\cap \cube = P\subset R$, one implication is given by \trrefp{Lemma}{lem:3}{lem:3:2}.

For the other, if $\xi$ and $\eta$ are adjacent in $P$ there exist $c\in\RR^n$ and $b\in\RR$ such that
\begin{equation}\label{equ:adj}
   c\cdot x \ge b
   \text{ for all $x\in P$,
      with equality if and only if $x\in\seg$.}
\end{equation}

If $c > \zero$ the result follows by \trrefp{Proposition}{propo:adj}{propo:adj:b} because $\conv(\vsr) \subset P$.
So let us assume that the set
\begin{equation}\label{equ:I}
   I = \{i\in \II \mid c_i\le 0\}
\end{equation}
is not empty. In this case, to prove that $\xi$ and $\eta$ are adjacent in $R$, we show that \trrefp{Proposition}{propo:adj}{propo:adj:c} is satisfied. Then, setting 
\[
\vsr = \{\xi = \zeta^1, \eta = \zeta^2,\dots,\zeta^r\} , 
\]
let a convex combination of the vertices of $R$ of the form
\begin{equation}\label{equ:x}
z = \sum_{k=1}^r \lambda_k \zeta^k
\end{equation}
and $\mu\ge\zero$ be such that
\begin{equation}\label{equ:xmu}
   z + \mu\in\seg.
\end{equation}

Notice that for any $x\in\seg \subset P$, since $\phix\in P$ for such $x$ and $c_i \le 0$ for $i\in I$, by~\eqref{equ:adj} we have
\[
  b = c\cdot x
   = \sum_{i\notin I} c_i x_i + \sum_{i\in I} c_i x_i
   \ge \sum_{i\notin I} c_i x_i + \sum_{i\in I} c_i
   = c\cdot \phix
   \ge b,
\]
and so $c\cdot \phix =b$. It follows that $\phix\in\seg$ by~\eqref{equ:adj}, and then that $x = \phix$ by~\trrefp{Lemma}{lem:1}{lem:1:b}. In particular, we conclude that
\begin{equation}\label{equ:seg}
   x_i = 1 \text{ for all $x\in\seg$ and all $i\in I$.}
\end{equation}

Letting
\begin{equation}\label{equ:y}
   y = \sum_{k=1}^r \lambda_k \phizk,
\end{equation}
and defining $\tau$ component-wise by
\begin{equation}\label{equ:tau}
   \tau_i = \begin{cases}
      0 & \text{if $i\in I$,} \\
      \mu_i & \text{otherwise},
      \end{cases}
\end{equation}
by checking the components and using~\eqref{equ:seg} and~\eqref{equ:xmu},
we see that $y + \tau = z + \mu$.
Moreover, as $\tau_i = 0$ for $i\in I$ and $c_i > 0$ for $i\notin I$, by~\eqref{equ:adj} we obtain
\[
b = c\cdot (z + \mu) = c\cdot (y + \tau) \ge c\cdot y \ge b,
\]
and therefore $\tau_i = 0$ for $i\notin I$, that is, $\tau = \zero$. Thus, we have $y = z + \mu \in\seg$.

By \trrefp{Lemma}{lem:vsp}{lem:vsp:c}, $\phizk\in\vsp$ for all $k$, and since $y\in\seg$, \eqref{equ:y} and the adjacency of $\xi$ and $\eta$ in $P$ imply now that for each $k = 1,\dots,r$, either $\phizk\in\conj$ or $\lambda_k = 0$. Using \trrefp{Lemma}{lem:1}{lem:1:a} and the fact that $\zeta^k \le \phizk$, we see that $\phizk\in\conj$ implies $\zeta^k\in\conj$. Thus, in~\eqref{equ:x} we must have either $\zeta^k\in\conj$ or $\lambda_k = 0$, that is,  \trrefp{Proposition}{propo:adj}{propo:adj:c} is satisfied.\qedhere
\end{proof}

\begin{coro}\label{coro:ricardo:2}
Suppose
$\vsr\subset\BB^n$,
$S$ is a polyhedron such that $\vsr\subset S\subset R$ and
$\phiz\in S$ for all $I\in\parts$ and $\zeta\in\vsr$,
and $\xi$ and $\eta$ are distinct vertices of $R$ (and hence of $S$ by \trrefp{Lemma}{lem:3}{lem:3:1}).

Then $\xi$ and $\eta$ are adjacent in $S$ if and only if they are adjacent in $R$.
\end{coro}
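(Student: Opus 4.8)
The plan is to exhibit $P:=R\cap\cube$ as sitting between $S$ and $R$, i.e.\ $P\subseteq S\subseteq R$, and then to move adjacency around this chain: \trrefp{Lemma}{lem:3}{lem:3:2} lets adjacency of a pair of common vertices descend along the inclusions, while \tref{Theorem}{thm:adj} lets it climb from $P$ all the way back up to $R$. Chaining these two facts will prove both implications of the corollary at once.

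First I would set $P=R\cap\cube$ and check that the hypotheses of \tref{Corollary}{coro:ricardo:1} are met: since $\vsr\subset\BB^n\subset\cube$, the pair $(R,P)$ satisfies~\eqref{equs:hipo}, and $S$ satisfies $\vsr\subset S\subset R$ together with $\phiz\in S$ for all $I\in\parts$ and $\zeta\in\vsr$ by assumption; hence that corollary gives $P\subset S$, and with $S\subset R$ we obtain $P\subseteq S\subseteq R$. Next I would record the vertex memberships needed below. The points $\xi$ and $\eta$ are distinct vertices of $R$ contained in $S$ (as $\vsr\subset S$), hence vertices of $S$ by~\trrefp{Lemma}{lem:3}{lem:3:1}; and, lying in $\vsr\subset\BB^n\subset\cube$, they also belong to $R\cap\cube=P$ (in fact $\xi,\eta\in\vsp$ by~\trrefp{Lemma}{lem:vsp}{lem:vsp:b}, although we only need that they lie in $P$).

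For the equivalence itself: if $\xi$ and $\eta$ are adjacent in $R$, then since $S\subseteq R$ and $\xi,\eta\in\vs(R)\cap S$, \trrefp{Lemma}{lem:3}{lem:3:2} shows they are adjacent in $S$. Conversely, if $\xi$ and $\eta$ are adjacent in $S$, then since $P\subseteq S$ and $\xi,\eta\in\vs(S)\cap P$, \trrefp{Lemma}{lem:3}{lem:3:2} shows they are adjacent in $P=R\cap\cube$; and then \tref{Theorem}{thm:adj}, whose hypotheses hold because $\vsr\subset\BB^n$ and $\xi,\eta$ are distinct vertices of $R$, upgrades this to adjacency in $R$. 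Together these give ``adjacent in $S$'' $\Leftrightarrow$ ``adjacent in $R$''.

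I do not expect any genuine obstacle here: all of the real content has already been packaged into \tref{Theorem}{thm:adj} and \tref{Corollary}{coro:ricardo:1}, and the only thing demanding a little care is the bookkeeping that $\xi$ and $\eta$ are legitimate vertices of $S$ and legitimate points of $P$, so that~\trrefp{Lemma}{lem:3}{lem:3:2} may be applied at each inclusion.
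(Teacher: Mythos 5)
Your proposal is correct and follows essentially the same route as the paper's own proof: use \tref{Corollary}{coro:ricardo:1} to place $P=R\cap\cube$ inside $S$, descend adjacency from $S$ to $P$ via \trrefp{Lemma}{lem:3}{lem:3:2}, lift it to $R$ via \tref{Theorem}{thm:adj}, and handle the converse directly with \trrefp{Lemma}{lem:3}{lem:3:2} applied to $\vsr\subset S\subset R$. The extra bookkeeping you include (that $\xi,\eta$ lie in $P$ and are vertices of $S$) is exactly the justification the paper uses as well.
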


\begin{proof}
Let us start by assuming that $\xi$ and $\eta$ are adjacent in $S$.
By \tref{Corollary}{coro:ricardo:1},
$P = R\cap\cube\subset S$,
and adjacency in $S$ implies adjacency in $P$ by \trrefp{Lemma}{lem:3}{lem:3:2} because
$\conj\subset \vs(S)\cap P$ and $P\subset S$.
So, by \tref{Theorem}{thm:adj}, $\xi$ and $\eta$ are adjacent in $R$.

On the other hand, if $\xi$ and $\eta$ are adjacent in $R$,
their adjacency in $S$ follows again from \trrefp{Lemma}{lem:3}{lem:3:2}, as $\vsr\subset S\subset R$.\qedhere
\end{proof}

The conclusion of relevance in
\tref{Theorem}{thm:adj}
is that vertices in the up-monotone polyhedron $R$ which are adjacent in $P = R\cap\cube$ are also adjacent in $R$, provided that $\vsr\subset\BB^n$. As we have seen in the \hyperref[sec:intro]{Introduction}, we cannot discard this hypothesis: if $A$ is given by~\eqref{equ:matA}, then $R = \qa$ has just one fractional vertex, and the vertices $(1,1,0)$ and $(0,1,1)$ are adjacent in $P = \qab$ but not in $R$.

\section{The claim in the original article}
\label{sec:original}

The polyhedron
\[ R = \qast = \conv(\{x\in\ZZ^n\mid Ax\ge\one, x\ge\zero\}), \]
is up-monotone and $\vsr\subset \BB^n$, and it is simple to see that
\[ S = \qastb = \conv(\{x\in\ZZ^n\mid Ax\ge\one,\one \ge x \ge \zero\}) \]
satisfies the hypothesis in \tref{Corollary}{coro:ricardo:2}, proving \tref{Claim}{laclaim}.

Notice that
\tref{Corollary}{coro:ricardo:1}
implies $\qast \cap \cube\subset \qastb$.
On the other hand,
clearly $\{x\in\ZZ^n\mid Ax\ge\one,\one \ge x \ge \zero\}$
is contained in both $\qast$ and $\cube$, so
\[ \qastb = \qast \cap \cube, \]
and actually \tref{Theorem}{thm:adj} may be applied directly.

\section*{Acknowledgements}
\label{sec:acknowledge}

The authors are very grateful to the anonymous reviewer for his comments and suggestions on a previous version of this paper. This work was partially supported by grant PIP 112-201101-01026 from Consejo Nacional de Investigaciones Cient\'ificas y T\'ecnicas (CONICET), Argentina.




\begin{thebibliography}{9}

\bibitem{AKT}
Aguilera, N.~E., Katz, R.~D., Tolomei, P.~B., 2017.
Vertex adjacencies in the set covering polyhedron.
Discrete Applied Mathematics, 218, 40--56. E-print \arxiv{1406.6015}.

\bibitem{Padberg}
Padberg, M., 1989. The Boolean quadric polytope: some characteristics, facets and relatives. Math.\ Program., 45, 139--172.

\bibitem{Trubin}
Trubin, V.,  1969. On a method of solution of integer linear programming problems of a special kind.
Sov.\ Math.\ Dokl., 10, 1544--1546.

\end{thebibliography}
\end{document}